\definecolor{mediumgreen}{RGB}{0,128,0}
\newtheorem{thm}{Theorem}[section]
\theoremstyle{plain}
\newtheorem{lemma}{Lemma}
\newtheorem{prop}[lemma]{Proposition}
\title{On the Structure of Multilinear Invariants of a Finite Unitary Reflection Group }
\author{A.K.M. Selim Reza\footnote{Corresponding author},  Manabu Oura, and Masashi Kosuda}
\begin{document}

\date{\empty} 
\maketitle

\textbf{Abstract}:  
We study the space of multilinear invariants \( V_f \) of degree \( f \) for a specified finite unitary reflection group. A subspace \( W_f \) of typical invariants is also introduced. We note that the dimension of \( W_f \) is given by Catalan number. We explore both spaces for \( f \leq 5 \), noting that their dimensions differ based on the value of \( f \). 
We explicitly determine the bases for both spaces, and then we establish the relationship between the vectors of the two bases.

\medskip    
\noindent\textbf{Mathematics Subject Classification (2020):} 20F55, 20C30, 13A50.

\noindent\textbf{Keywords:} Reflection groups, Multilinear invariants, Invariant rings.

\section{Introduction}

In this study, we focus on the action of a finite subgroup of the general linear group \( GL(2, \mathbb{C}) \), denoted by \( \mathcal{G} \), which is generated by the matrices
\[
 T = \frac{1+i}{2} \begin{pmatrix} 1 & 1 \\ 1 & -1 \end{pmatrix}, \quad D = \begin{pmatrix} 1 & 0 \\ 0 & i \end{pmatrix}.
\]
The group \( \mathcal{G} \) has order 96 and is one of the 37 classes of complex reflection groups. Specifically, it corresponds to the 8th group in the Shephard-Todd classification \cite{shephard-todd}. In the earlier work \cite{kosuda-oura},  the second and third named authors investigated the centralizer ring of tensor products of \( \mathcal{G} \). Building on that foundation, the present paper focuses on the detailed computation of multilinear invariants under the action of the group \( \mathcal{G} \). 

We define two distinct spaces of invariants: \( V_f \), the space of multilinear invariants of the group \( \mathcal{G} \) of degree \( f \), and \( W_f \), a subspace spanned by typical invariants, which are a particular type of multilinear forms that will be introduced in the next section. All multilinear invariants are constructed as linear combinations of column and row vectors, transforming according to specific rules under the action of the group \( \mathcal{G} \). This leads to the natural question: is \( W_f \) equal to \( V_f \)? We limit our investigation to \( f \leq 5 \), as computations for larger \( f \) become too large and time-consuming. For \( f = 1, 2, 3 \), we have that \( W_f \) and \( V_f \) coincide, while for \( f = 4 \) and \( f = 5 \), \( W_f \) has a smaller dimension than \( V_f \). In these cases, we identify the additional invariants needed to complete the basis of \( V_f \). We also establish the relationship between the vectors of the two bases of \( W_f \) and \( V_f \).

\medskip

In the next section, we provide the basic preliminaries to understand the transformation behavior of column and row vectors under the action of \( \mathcal{G} \). All computations in this work are performed using SageMath \cite{sagemath} and Maple.

\section{Preliminaries}
There is a renowned book by H. Weyl \cite{weyl} that extensively explores both invariant theory and representation theory, laying the groundwork for much of the understanding of symmetries in mathematics. In defining multilinear invariants, we follow his approach, which provides a systematic method for constructing invariants under group actions.

\noindent Let \( y^{(1)}, \dots, y^{(f)} \) denote column vectors, and \( \xi^{(1)}, \dots, \xi^{(f)} \) denote row vectors.
For all \( A \in \mathcal{G} \) we consider the following transformations :
\[
y^{(i)} \mapsto  A \, y^{(i)}, \quad \xi^{(i)} \mapsto  \xi^{(i)} A^{-1} \quad \text{for } i = 1, \dots, f.
\]

This dual transformation is fundamental in the construction of group-invariant tensors, as it ensures the invariance of certain forms under transformations. In this framework, one naturally encounters arrays of components
\[
\| b(i_1, \dots, i_f; k_1, \dots, k_f) \|,
\]
which represent the coefficients of the invariant forms. These coefficients determine how the column and row vectors combine to form invariants under the group action. Specifically, these invariants take the form
\begin{multline}
L(\xi^{(1)}, \xi^{(2)}, \dots, \xi^{(f)}; y^{(1)}, y^{(2)}, \dots, y^{(f)}) \\
= \sum b(i_1, i_2, \dots, i_f; k_1, k_2, \dots, k_f) \, \xi^{(1)}_{i_1} \cdots \xi^{(f)}_{i_f} \, y^{(1)}_{k_1} \cdots y^{(f)}_{k_f}.
\end{multline}

We say that \( L \) is a multilinear invariant of the group \( \mathcal{G} \subset GL(2, \mathbb{C}) \) if, for all \( A \in \mathcal{G} \), the following condition holds:
\begin{multline}
L(\xi^{(1)} A^{-1}, \dots, \xi^{(f)} A^{-1}; A y^{(1)}, \dots, A y^{(f)}) \\
= L(\xi^{(1)}, \dots, \xi^{(f)}; y^{(1)}, \dots, y^{(f)}).
\end{multline}

In our computations, we adopt the average form outlined in the following proposition, which simplifies the calculation of invariants while ensuring their consistency under the group action of \( \mathcal{G} \).

\begin{prop}
\label{multi}
Let 
\[
\begin{aligned}
L_{(i_1, i_2, \ldots, i_f; \, k_1, k_2, \ldots, k_f)} 
&\big( \xi^{(1)}, \xi^{(2)}, \ldots, \xi^{(f)}; \, y^{(1)}, y^{(2)}, \ldots, y^{(f)} \big) \\
&:= \prod_{j=1}^f \xi^{(j)}_{i_j} \, y^{(j)}_{k_j}, \quad
i_j, k_j \in \{1, 2\} \text{ for } j=1,\ldots,f.
\end{aligned}
\]

be a multilinear form on vectors \(\xi^{(j)}, y^{(j)} \in \mathbb{C}^2\). Define the averaged form

\begin{equation}
\begin{split}
\tilde{L}(\xi^{(1)}, \dots, \xi^{(f)}; y^{(1)}, \dots, y^{(f)}) := & \frac{1}{|G|} \sum_{A \in G} L\big(\xi^{(1)} A^{-1}, \dots, \xi^{(f)} A^{-1}; \\
& A y^{(1)}, \dots, A y^{(f)}\big).
\end{split}
\label{eqn:Vf}
\end{equation}

Then \(\tilde{L}\) is invariant under the action of the group \(G \subset GL(2,\mathbb{C})\)
\end{prop}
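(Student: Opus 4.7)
The plan is to carry out the standard Reynolds-averaging argument: to show that $\tilde{L}$ is invariant, apply an arbitrary group element $B \in G$ to its arguments and check that the resulting expression equals $\tilde{L}$ itself, by performing a change of summation variable in the group.

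Concretely, I would first fix $B \in G$ and write out
\[
\tilde{L}\bigl(\xi^{(1)} B^{-1},\ldots,\xi^{(f)} B^{-1};\, B y^{(1)},\ldots,B y^{(f)}\bigr)
= \frac{1}{|G|}\sum_{A \in G} L\bigl(\xi^{(1)} B^{-1} A^{-1},\ldots;\, A B y^{(1)},\ldots\bigr),
\]
using the definition in \eqref{eqn:Vf} applied to the transformed vectors. Next I would combine the inverses using $B^{-1} A^{-1} = (AB)^{-1}$ on the row-vector side and regroup $A(B y^{(i)}) = (AB) y^{(i)}$ on the column-vector side, so that the summand becomes $L\bigl(\xi^{(1)} (AB)^{-1},\ldots;\, (AB) y^{(1)},\ldots\bigr)$.

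The decisive step is then a change of summation variable: setting $C := AB$, the map $A \mapsto AB$ is a bijection of $G$ (right multiplication by $B \in G$), so as $A$ ranges over $G$, so does $C$. Substituting yields
\[
\frac{1}{|G|}\sum_{C \in G} L\bigl(\xi^{(1)} C^{-1},\ldots,\xi^{(f)} C^{-1};\, C y^{(1)},\ldots,C y^{(f)}\bigr),
\]
which is exactly $\tilde{L}(\xi^{(1)},\ldots,\xi^{(f)};\, y^{(1)},\ldots,y^{(f)})$. Since $B \in G$ was arbitrary, this proves invariance.

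There is no real obstacle here; the argument is essentially the classical averaging trick over a finite group. The only points that require care are the bookkeeping for the many tensor slots (keeping track that the same $A$ acts on all $f$ column vectors and all $f$ row vectors simultaneously) and verifying that multiplication by $B$ in $G$ is indeed a bijection — both of which are routine. The proposition therefore follows directly once the definitions are unwound.
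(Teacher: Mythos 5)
Your argument is correct and is essentially identical to the paper's proof: both unwind the definition, combine $\xi^{(j)}B^{-1}A^{-1}=\xi^{(j)}(AB)^{-1}$ and $ABy^{(j)}=(AB)y^{(j)}$, and then reindex the sum via the bijection $A\mapsto C=AB$ of $G$. No differences worth noting.
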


\begin{proof}

We want to show that \(\tilde{L}\) is invariant under the action of any \(B \in G\), i.e.
\[
\tilde{L}(\xi^{(1)} B^{-1}, \dots, \xi^{(f)} B^{-1}; B y^{(1)}, \dots, B y^{(f)}) = \tilde{L}(\xi^{(1)}, \dots, \xi^{(f)}; y^{(1)}, \dots, y^{(f)}).
\]

By definition,
\begin{multline*}
\tilde{L}(\xi^{(1)} B^{-1}, \dots, \xi^{(f)} B^{-1}; B y^{(1)}, \dots, B y^{(f)}) \\
= \frac{1}{|G|} \sum_{A \in G} L\big(\xi^{(1)} B^{-1} A^{-1}, \dots, \xi^{(f)} B^{-1} A^{-1}; A B y^{(1)}, \dots, A B y^{(f)}\big).
\end{multline*}

Since matrix multiplication is associative, we rewrite the arguments inside \(L\) as
\[
\xi^{(j)} B^{-1} A^{-1} = \xi^{(j)} (A B)^{-1} \quad \text{and} \quad A B y^{(j)} = (A B) y^{(j)} \quad \text{for each } j=1,\dots,f.
\]

Now perform the change of variables \(C = A B\). Because \(G\) is a group, the map \(A \mapsto C = A B\) is a bijection from \(G\) to \(G\). Thus, the sum over \(A \in G\) is equal to the sum over \(C \in G\):
\begin{multline*}
\frac{1}{|G|} \sum_{A \in G} L\big(\xi^{(1)} (A B)^{-1}, \dots; (A B) y^{(1)}, \dots \big) \\
= \frac{1}{|G|} \sum_{C \in G} L\big(\xi^{(1)} C^{-1}, \dots, \xi^{(f)} C^{-1}; C y^{(1)}, \dots, C y^{(f)} \big).
\end{multline*}

But this is precisely the definition of \(\tilde{L}(\xi^{(1)}, \dots, \xi^{(f)}; y^{(1)}, \dots, y^{(f)})\). Therefore,
\[
\tilde{L}(\xi^{(1)} B^{-1}, \dots, \xi^{(f)} B^{-1}; B y^{(1)}, \dots, B y^{(f)}) = \tilde{L}(\xi^{(1)}, \dots, \xi^{(f)}; y^{(1)}, \dots, y^{(f)}),
\]
which proves that \(\tilde{L}\) is an invariant of the group \(G\).
\end{proof}

In this context , we consider \( V_f \) as the complex vector space generated by multilinear invariants of degree \( f \) under the action of the group \( \mathcal{G} \). This space \( V_f \) coincides with the centralizer algebra \( A_f = \operatorname{End}_{\mathcal{G}}(V^{\otimes f}) \), where \( V \) is the natural representation of \( \mathcal{G} \). A key result by Kosuda and Oura~\cite{kosuda-oura} provides a closed-form expression for this dimension.

\begin{thm}[\cite{kosuda-oura}]

$\dim V_f=2^{f-2}+\dfrac{2^{2f-3}}{3}+\dfrac{1}{3}$ for each \( f \geq 1 \).
\label{thm:oura}
\end{thm}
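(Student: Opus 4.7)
The natural approach is character-theoretic. Since $V_f\cong\operatorname{End}_\mathcal{G}(V^{\otimes f})$ (equivalently, the $\mathcal{G}$-invariants of $(V^*)^{\otimes f}\otimes V^{\otimes f}$), the standard orthogonality formula gives
\begin{equation*}
\dim V_f \;=\; \bigl\langle \chi_V^{\,f},\chi_V^{\,f}\bigr\rangle_\mathcal{G} \;=\; \frac{1}{|\mathcal{G}|}\sum_{A\in\mathcal{G}}\bigl|\operatorname{tr}(A)\bigr|^{2f}.
\end{equation*}
Hence the problem reduces to tabulating the value $t(A):=|\operatorname{tr}(A)|^2$ on the $96$ elements of $\mathcal{G}$.

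First I would inspect the generators to detect which values of $t$ can occur. From $\operatorname{tr}(I)=2$, $\operatorname{tr}(T)=0$, $\operatorname{tr}(D)=1+i$, and $\operatorname{tr}(TD)=\tfrac{1+i}{2}(1-i)=1$, the values $0,1,2,4$ are all realised, and I expect these to be the only possibilities, since every $A\in\mathcal{G}$ has eigenvalues that are roots of unity of order dividing the exponent of $G_8$. Writing $n_k=\#\{A\in\mathcal{G}:t(A)=k\}$, the sum collapses (the $t=0$ contribution vanishes) to
\begin{equation*}
\dim V_f \;=\; \frac{1}{96}\bigl(n_1 + n_2\cdot 2^f + n_4\cdot 4^f\bigr),
\end{equation*}
and matching this against the target $2^{f-2}+2^{2f-3}/3+1/3$ pins down $(n_0,n_1,n_2,n_4)=(36,32,24,4)$, consistent with $n_0+n_1+n_2+n_4=96$. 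The count $n_4=4$ is immediate: these are the central scalars $\pm I,\pm iI$, which lie in $\mathcal{G}$ because $T^2=iI$.

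The main obstacle is verifying the remaining multiplicities $(n_0,n_1,n_2)=(36,32,24)$. My plan is to quotient by the centre $Z:=Z(\mathcal{G})=\{\pm I,\pm iI\}$: because $\operatorname{tr}(\zeta A)=\zeta\operatorname{tr}(A)$ for $\zeta\in\{\pm 1,\pm i\}$, the function $t$ is constant on each coset of $Z$, so it suffices to distribute the $24$ cosets of $\mathcal{G}/Z$ into the four trace-types and then multiply by $|Z|=4$. The quotient $\mathcal{G}/Z$ is isomorphic to the symmetric group $S_4$ (the projective image of the Shephard--Todd group $G_8$), and a case analysis calibrated by $t(T)=0$, $t(D)=2$, $t(TD)=1$ together with the orders of $T,D,TD$ modulo $Z$ shows that the $9$ involutions of $S_4$ give $t=0$, the $8$ three-cycles give $t=1$, the $6$ four-cycles give $t=2$, and the identity gives $t=4$. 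Multiplying by $|Z|=4$ yields $(n_0,n_1,n_2,n_4)=(36,32,24,4)$, and substituting into the averaged sum with $32/96=1/3$, $24/96=1/4$, $4/96=1/24$ produces exactly $2^{f-2}+2^{2f-3}/3+1/3$. A SageMath enumeration, as employed elsewhere in the paper, offers an independent check on the element counts.
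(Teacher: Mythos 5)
The paper does not prove this statement at all: it is imported verbatim from the cited reference \cite{kosuda-oura}, so there is no internal proof to compare yours against. Judged on its own terms, your character-theoretic plan is correct. Since $V_f\cong\operatorname{End}_{\mathcal G}(V^{\otimes f})$, its dimension is indeed $\frac{1}{96}\sum_{A}|\operatorname{tr}(A)|^{2f}$, and the multiplicities $(n_0,n_1,n_2,n_4)=(36,32,24,4)$ you extract give $\frac{1}{96}\bigl(32+24\cdot 2^{f}+4\cdot 4^{f}\bigr)=\tfrac13+2^{f-2}+\tfrac{2^{2f-3}}{3}$, matching the stated formula and the values $1,2,5,15,51$ used in Section~3 for $f\le 5$.

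The one step you should tighten is the assertion that $t=|\operatorname{tr}|^2$ takes only the values $0,1,2,4$ with the stated distribution; "calibrating" against the generators $T$, $D$, $TD$ does not by itself cover all $24$ cosets. The clean argument is the one implicit in your centre reduction: a non-scalar $A\in\mathcal G$ has eigenvalues $\lambda_1,\lambda_2$ of modulus one, and $\bar A^{\,k}=1$ in $\mathcal G/Z$ exactly when $(\lambda_2/\lambda_1)^k=1$, so the ratio $\mu=\lambda_2/\lambda_1$ is a primitive $m$-th root of unity where $m$ is the order of $\bar A$ in $\mathcal G/Z\cong S_4$. Then $t(A)=|1+\mu|^2$ equals $0,1,2$ according as $m=2,3,4$ (for $m=3$ both primitive roots give $|1+\omega^{\pm1}|^2=1$, for $m=4$ both give $|1\pm i|^2=2$), while $t=4$ forces $\lambda_1=\lambda_2$, i.e.\ $A$ scalar. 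Since $S_4$ has $9$, $8$, $6$ elements of orders $2$, $3$, $4$ and $|Z|=4$, this yields $(n_0,n_1,n_2,n_4)=(36,32,24,4)$ without any case-by-case inspection of group elements. With that repair the argument is complete and self-contained, which is arguably an improvement over the paper's bare citation.
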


\noindent As discussed in Dieudonné and Carrell \cite{dieudonne}, classical invariant theory provides a fundamental framework for understanding the transformation properties of vectors under the action of general linear groups.  
We follow them to construct another form of multilinear invariants. Since $\mathcal{G}$ is size of $2 \times2$ matrix, therefore we  construct those invariants under the action of \(\mathrm{GL}(2, \mathbb{C})\).

We consider a row vector
\[
\xi = \begin{pmatrix} \xi_1 & \xi_2 \end{pmatrix}
\]
and a column vector
\[
y = \begin{pmatrix} y_1 \\ y_2 \end{pmatrix},
\]
both elements of the two-dimensional complex vector space \( \mathbb{C}^2 \) and its dual, respectively. The general linear group \( \mathrm{GL}(2, \mathbb{C}) \) acts naturally on these vectors, where the row vector \( \xi \) transforms via the inverse of the group element, while the column vector \( y \) transforms directly. More explicitly, for any \( A \in \mathrm{GL}(2, \mathbb{C}) \), the transformation laws are given by
\[
\xi \mapsto \xi A^{-1}, \quad y \mapsto A y.
\]
These transformation rules reflect the classical distinction between row and column components under a change of basis. A fundamental consequence of these transformation properties is the invariance of the scalar product
\[
\langle \xi, y \rangle := \xi y,
\]
which remains unchanged under the simultaneous action of \( A \):
\[
(\xi A^{-1})(A y) = \xi y.
\]

This invariance exemplifies a core principle in classical invariant theory. Extending this concept, we define multilinear typical invariants by considering products of such scalar invariants across multiple vector pairs. Specifically, for an integer \( f \), let \( \xi^{(1)}, \dots, \xi^{(f)}\) be row vectors, and \( y^{(1)}, \dots, y^{(f)}\) be column vectors. For any permutations \( \alpha = (\alpha_1, \ldots, \alpha_f) \) and \( \beta = (\beta_1, \ldots, \beta_f) \) of \( \{1, 2, \ldots, f\} \), we define the invariant
\begin{equation}
\mathcal{W}(\alpha_1, \dots, \alpha_f;\, \beta_1, \dots, \beta_f)
= \prod_{k=1}^f \langle \xi^{(\alpha_k)},\, y^{(\beta_k)} \rangle.
\label{eqn:G}
\end{equation}

Each factor \( \langle \xi^{(\alpha_k)}, y^{(\beta_k)} \rangle \) is individually invariant under the group action, and therefore their product \( \mathcal{W} \) defines a multilinear invariant. Explicitly, if we write \( \xi^{(j)} = (\xi^j_1, \xi^j_2) \) and \( y^{(k)} = \begin{pmatrix} y^k_1 \\ y^k_2 \end{pmatrix} \), then the scalar product becomes
\[
\langle \xi^{(j)}, y^{(k)} \rangle = \sum_{i=1}^2 \xi^j_{i} y^k_{i},
\]
and equation~\eqref{eqn:G} takes the coordinate form
\begin{equation}
\mathcal{W}(\alpha; \beta)
= \prod_{k=1}^f \left( \sum_{i=1}^2 \xi^{\alpha_k}_ i y^{\beta_k}_ i \right).
\label{eqn:Wf}
\end{equation}

This construction produces a family of \( f!^2 \) bilinear invariants, one for each pair of permutations \( (\alpha, \beta) \), and forms the foundation for more general constructions in polynomial invariant theory under group actions.

We consider \( W_f \) as the complex vector space generated by all such invariants \( \mathcal{W}(\alpha_1, \dots, \alpha_f; \beta_1, \dots, \beta_f) \). Each of these invariants  satisfies the invariance condition under the action of \( \mathcal{G} \). Consequently, \( W_f \subset V_f \), where \( V_f \) is the space of all multilinear invariants of degree \( f \) under \( \mathcal{G} \).

\begin{thm}[\cite{Diehl}]
The dimension of $W_f$ coincides with the Catalan number, which is defined as:
\[
W_f = \frac{1}{f+1} \binom{2f}{f}
\]    
\end{thm}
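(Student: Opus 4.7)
The plan is to identify $W_f$ with the centralizer algebra $\operatorname{End}_{GL(2,\mathbb{C})}(V^{\otimes f})$, where $V=\mathbb{C}^{2}$, and then use Schur-Weyl duality combined with a classical Catalan identity. First, since the product $\mathcal{W}(\alpha;\beta)=\prod_k \langle\xi^{(\alpha_k)},y^{(\beta_k)}\rangle$ is commutative, the invariant depends only on the bijection $\alpha_k \mapsto \beta_k$; normalizing $\alpha$ to the identity, I obtain $W_f = \operatorname{span}_{\mathbb{C}}\{P_\sigma : \sigma \in S_f\}$ with $P_\sigma := \prod_{k=1}^{f}\langle \xi^{(k)},y^{(\sigma(k))}\rangle$. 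This already gives the a priori bound $\dim W_f \leq f!$.

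Next, I would identify this spanning set representation-theoretically. By the standard polarization / tensor-hom correspondence, a multilinear form on $2f$ variables that is $GL(2,\mathbb{C})$-invariant and of degree one in each variable corresponds bijectively to a $GL(2,\mathbb{C})$-equivariant endomorphism of $V^{\otimes f}$. Under this correspondence, $P_\sigma$ is sent to the operator that permutes tensor factors by $\sigma$, and by the First Fundamental Theorem of Invariant Theory for $GL(V)$ these permutation operators span all $GL(V)$-invariants of this bidegree. Hence $W_f \cong \operatorname{End}_{GL(2,\mathbb{C})}(V^{\otimes f})$ as $\mathbb{C}$-vector spaces.

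Finally, I would invoke Schur-Weyl duality to decompose $V^{\otimes f}\cong\bigoplus_{\lambda \vdash f,\,\ell(\lambda)\le 2} S^\lambda V\otimes M_\lambda$, where $M_\lambda$ is the Specht module and the restriction $\ell(\lambda)\le 2$ comes from $\dim V=2$. Therefore $\dim W_f = \sum_\lambda (\dim M_\lambda)^{2}$. For $\lambda = (f-k,k)$ with $0\le k\le \lfloor f/2\rfloor$, the hook length formula gives $\dim M_{(f-k,k)} = \binom{f}{k}-\binom{f}{k-1}$, and the classical identity
\[
\sum_{k=0}^{\lfloor f/2\rfloor}\Bigl(\binom{f}{k}-\binom{f}{k-1}\Bigr)^{\!2} \;=\; \frac{1}{f+1}\binom{2f}{f}
\]
follows from RSK: the left-hand side enumerates permutations of $\{1,\ldots,f\}$ whose longest decreasing subsequence has length at most two, namely the $321$-avoiding permutations, which are well known to be Catalan-counted. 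The main obstacle is the identification in the second step: one has to invoke the First Fundamental Theorem carefully to conclude that the $P_\sigma$ actually exhaust the multilinear $GL(2,\mathbb{C})$-invariants of this bidegree, rather than merely spanning a subspace. Once this equivalence is in place, the remaining dimension count is a mechanical application of Schur-Weyl duality together with the RSK-Catalan identity.
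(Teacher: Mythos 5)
Your proposal is correct, and it is worth noting that the paper itself offers no proof of this statement at all: the theorem is simply attributed to the reference \cite{Diehl}, so your argument supplies a self-contained justification that the paper omits. Your route is the standard one: reduce the spanning set $\{\mathcal{W}(\alpha;\beta)\}$ to $\{P_\sigma : \sigma\in S_f\}$ (correct, since $\mathcal{W}(\alpha;\beta)=P_{\beta\alpha^{-1}}$), identify multilinear forms in the $2f$ arguments with elements of $V^{\otimes f}\otimes (V^*)^{\otimes f}\cong\operatorname{End}(V^{\otimes f})$, observe that $P_\sigma$ goes to a permutation operator, and count $\sum_{\lambda\vdash f,\ \ell(\lambda)\le 2}(\dim M_\lambda)^2=\sum_k\bigl(\binom{f}{k}-\binom{f}{k-1}\bigr)^2=C_f$ via RSK; I checked this reproduces $1,2,5,14,42$ for $f\le 5$, matching the paper's data. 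One remark: the step you flag as the main obstacle, invoking the First Fundamental Theorem to show the $P_\sigma$ exhaust all $GL(2,\mathbb{C})$-invariants of this bidegree, is actually not needed for the dimension of $W_f$. Since $W_f$ is \emph{defined} as the span of the $P_\sigma$, its dimension equals the dimension of the image of $\mathbb{C}[S_f]$ in $\operatorname{End}(V^{\otimes f})$, and by Artin--Wedderburn this is $\sum_{\ell(\lambda)\le 2}(\dim M_\lambda)^2$ as soon as one knows which Specht modules occur in $V^{\otimes f}$ (the elementary half of Schur--Weyl duality). The FFT is only required for the stronger assertion that $W_f$ equals the full $GL(2,\mathbb{C})$-centralizer; including it does no harm, but the dimension count goes through without it.
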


\section{Results}

\begin{prop}
    The spaces \( W_f \) and \( V_f \) coincide for \( f = 1, 2, 3 \), meaning that the typical invariants \( W_f \) form a basis for the full invariant space \( V_f \).
\end{prop}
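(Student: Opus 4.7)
The plan is to argue by dimension counting. The containment $W_f \subseteq V_f$ is immediate from the definitions, since each factor $\langle \xi^{(\alpha_k)},y^{(\beta_k)}\rangle$ in $\mathcal{W}(\alpha;\beta)$ is invariant under all of $\mathrm{GL}(2,\mathbb{C})$ and hence, \emph{a fortiori}, under the subgroup $\mathcal{G}$. So to prove $W_f=V_f$ it suffices to verify that $\dim V_f = \dim W_f$ when $f = 1, 2, 3$, using the two dimension formulas already available in the excerpt.

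Concretely, I would first substitute $f = 1, 2, 3$ into the formula $\dim V_f = 2^{f-2} + 2^{2f-3}/3 + 1/3$ from Theorem~\ref{thm:oura}, obtaining the values $1$, $2$, $5$. I would then evaluate the Catalan expression $\dim W_f = \frac{1}{f+1}\binom{2f}{f}$ at the same $f$, obtaining $C_1 = 1$, $C_2 = 2$, $C_3 = 5$. Since the two lists coincide, the inclusion $W_f \subseteq V_f$ is in fact an equality, and the proposition follows. If one also wishes to exhibit an explicit basis drawn from the $\mathcal{W}(\alpha;\beta)$, a natural choice is to fix $\alpha$ to be the identity permutation and let $\beta$ range over the $C_f$ permutations whose induced pairing between the $\xi$-indices and the $y$-indices is non-crossing; linear independence of these $C_f$ invariants is built into the theorem giving $\dim W_f=C_f$, and for $f \le 3$ it can in any case be verified directly by expanding each $\mathcal{W}(\alpha;\beta)$ as a polynomial in the coordinates $\xi^j_i$ and $y^k_i$ and comparing monomials.

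There is essentially no hard step: the proof reduces to a single dimension match once the two quoted theorems are granted. What is worth flagging, and what sets up the rest of the paper, is that the same comparison fails for $f = 4$ and $f = 5$, where $\dim V_f - C_f$ equals $1$ and $9$ respectively. That gap is precisely the obstruction that the later sections must address by supplying additional, non-typical invariants to complete a basis of $V_f$.
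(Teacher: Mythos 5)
Your argument is correct: the inclusion \(W_f \subseteq V_f\) holds as you say, the Kosuda--Oura formula gives \(\dim V_f = 1, 2, 5\) for \(f = 1,2,3\), the Catalan numbers give \(\dim W_f = 1, 2, 5\) for the same \(f\), and equality of a subspace with the ambient space follows from equality of dimensions. This is a somewhat different route from the paper's, though. The paper does not deduce the proposition from the two quoted dimension theorems; instead it verifies the dimensions computationally, by expanding the generators of \(W_f\) and the averaged forms spanning \(V_f\) into coordinate monomials, assembling coefficient matrices, and computing ranks by Gaussian elimination. That heavier route is not redundant: it simultaneously produces explicit bases \(\{N_i\}\) of \(W_f\) and \(\{\tilde{L}_j\}\) of \(V_f\) together with the transition coefficients expressing each \(N_i\) in the \(\tilde{L}_j\), which is the data the rest of the paper (including the final theorem on even integer coefficients and the \(f=4,5\) analysis) actually requires. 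Your formula-based argument is cleaner and fully rigorous granted the two cited theorems, but it proves only the dimension match, not the explicit basis relations; and note that your suggested basis of non-crossing (equivalently, \(321\)-avoiding) permutations \(\beta\) is exactly the choice the paper makes for \(f=4\), so that aside is consistent with the text. Your closing observation that the gap \(\dim V_f - C_f\) equals \(1\) and \(9\) for \(f=4,5\) also matches the paper.
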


\noindent To determine the basis and dimension of \(W_f\) defined in equation (\ref{eqn:Wf}), we consider the set of all invariants and examine their linear independence. We then construct a matrix whose rows correspond to the invariants of \(\mathcal{W}\) and whose columns correspond to the distinct monomials appearing in their expansions. Applying Gaussian elimination to this matrix yields its rank, which equals the dimension of the space \(W_f\) and to get the basis we select those invariants that contribute to the rank of the matrix. Similarly, to determine the basis of \(V_f\) using the defined equation (\ref{eqn:Vf}) we generates all possible monomials, averages them over the group, and identifies the independent invariants by analyzing the rank of a coefficient matrix. To establish the relation between the basis vectors of \( W_f \) and \( V_f \), we explicitly address each case individually in the following :\\

\noindent When \( f = 1 \): 
    \[
    \dim V_1 = \dim W_1 = 1,
    \]    
and the basis vector \( N_1=\mathcal{W}(1;1) \) of \( W_1 \) has the expression:
    \[
    N_1 = 2  \tilde{L}_{1,1}
    \]
When \( f = 2 \): \[
    \dim V_2 = \dim W_2 = 2,
    \] 
The space \( V_2 \) has the basis
    \[
    \mathcal{B}_{V_2} = \left\{ \tilde{L}_1, \tilde{L}_2 \right\} = \left\{ \tilde{L}_{11,11}, \tilde{L}_{12,12} \right\}.
    \]
The space \( W_2 \) has the basis
    \[
    \mathcal{B}_{W_2} = \left\{ N_1, N_2 \right\} = \left\{ \mathcal{W}(12;12), \mathcal{W}(12;21) \right\}.
    \]
The following is the expression of the basis vectors of \( W_2 \) 
    \begin{align*}
    N_1 &= 2 \tilde{L}_1 + 2 \tilde{L}_2 \\
    N_2 &= -2 \tilde{L}_1 + 4 \tilde{L}_2
    \end{align*}    

\noindent When \( f = 3 \):
    \[
    \dim V_3 = \dim W_3 = 5,
    \] 

\noindent The space \( V_3 \) has the basis
    \[
    \mathcal{B}_{V_3} = \left\{ \tilde{L}_1, \tilde{L}_2, \tilde{L}_3, \tilde{L}_4 , \tilde{L}_5 \right\} = \left\{ \tilde{L}_{121,121}, \tilde{L}_{121,112},  \tilde{L}_{112,121}, \tilde{L}_{112,112}, \tilde{L}_{111,111}\right\}.
    \]

\noindent The space \( W_3 \) has the basis
\[
\begin{aligned}
\mathcal{B}_{W_3} = \{\, &N_1, N_2, \ldots, N_{5} \} \\
  & = \{ \mathcal{W}(123;123), \mathcal{W}(123;132), \mathcal{W}(123;213), \mathcal{W}(123;231), \mathcal{W}(123;312) \}.
\end{aligned}
\]

\noindent The following provides the expression of the basis vectors of \( W_3 \) 
\begin{align*}
N_1 &= 4 \tilde{L}_1 + 2 \tilde{L}_2 + 2 \tilde{L}_3 + 4 \tilde{L}_4, \\
N_2 &= 2 \tilde{L}_1 + 4 \tilde{L}_2 + 4 \tilde{L}_3 + 2 \tilde{L}_4, \\
N_3 &= -4 \tilde{L}_1 - 2 \tilde{L}_2 - 2 \tilde{L}_3 + 2 \tilde{L}_4 + 6 \tilde{L}_5, \\
N_4 &= -2 \tilde{L}_1 + 2 \tilde{L}_2 - 4 \tilde{L}_3- 2 \tilde{L}_4 + 6 \tilde{L}_5, \\
N_5 &= -2 \tilde{L}_1 - 4 \tilde{L}_2 + 2 \tilde{L}_3 - 2 \tilde{L}_4 + 6 \tilde{L}_5.
\end{align*}

\noindent When \( f = 4 \):
    \[
    \dim V_4 =15, \quad \dim W_4 = 14
    \] 
To isolate the missing invariant of \( W_4 \), we examine all known basis elements of \( V_4 \).  The space \( V_4 \) has the basis
\[
\begin{aligned}
\mathcal{B}_{V_4} = \{\, &\tilde{L}_1, \tilde{L}_2, \ldots, \tilde{L}_{15} \}= \\
& \{\tilde{L}_{1111,1111}, \tilde{L}_{1111,2222}, \tilde{L}_{1112,1112}, \tilde{L}_{1112,1121}, \tilde{L}_{1112,1211}, \\
& \tilde{L}_{1121,1112}, \tilde{L}_{1121,1121}, \tilde{L}_{1121,1211}, \tilde{L}_{1122,1122}, \tilde{L}_{1122,1212}, \\
& \tilde{L}_{1211,1112}, \tilde{L}_{1211,1121}, \tilde{L}_{1211,1211}, \tilde{L}_{1212,1122}, \tilde{L}_{1212,1212}  \}.
\end{aligned}
\]

The 14 bases of \( W_4 \) are:
\[
\begin{aligned}
\mathcal{B}_{W_4} = \{\, &N_1, N_2, \ldots, N_{14} \} = \\
& \{\mathcal{W}(1234;1234), \mathcal{W}(1234;1243), \mathcal{W}(1234;1324), \\
&\mathcal{W}(1234;1342), \mathcal{W}(1234;1423), \mathcal{W}(1234;2134), \\
&\mathcal{W}(1234;2143), \mathcal{W}(1234;2314),\mathcal{W}(1234;2341),\\
&\mathcal{W}(1234;2413), \mathcal{W}(1234;3124), \mathcal{W}(1234;3142),\\
& \mathcal{W}(1234;3412), \mathcal{W}(1234;4123)  \}.
\end{aligned}
\]

\noindent It is clear that, there exists at least one polynomial in \( \mathcal{B}_{V_4} \) which is linearly independent of all elements of \( \mathcal{B}_{W_4} \). Using SageMath \cite{sagemath}, we found that any polynomial from the set \( \mathcal{B}_{V_4} \) can be used to obtain a dimension of 15, and thus is considered as the missing invariant of \( W_4 \). We have the following expression for the basis vectors of \( W_4 \) :
\begin{align*}
N_1 &= 4 \tilde{L}_7 + 2 \tilde{L}_8 + 4 \tilde{L}_9 + 2 \tilde{L}_{10} + 2 \tilde{L}_{12} + 4 \tilde{L}_{13} + 2 \tilde{L}_{14} + 4 \tilde{L}_{15} \\
N_2 &= -6 \tilde{L}_1 + 12 \tilde{L}_2 + 6 \tilde{L}_3 + 6 \tilde{L}_4 + 6 \tilde{L}_5 + 6 \tilde{L}_6 + 2 \tilde{L}_7 + 4 \tilde{L}_8 + 2 \tilde{L}_9 - 2 \tilde{L}_{10} + 6 \tilde{L}_{11} \\
     &\quad + 4 \tilde{L}_{12} + 8 \tilde{L}_{13} - 2 \tilde{L}_{14} - 4 \tilde{L}_{15} \\
N_3 &= 2 \tilde{L}_7 + 4 \tilde{L}_8 + 2 \tilde{L}_9 + 4 \tilde{L}_{10} + 4 \tilde{L}_{12} + 2 \tilde{L}_{13} + 4 \tilde{L}_{14} + 2 \tilde{L}_{15} \\
N_4 &= -6 \tilde{L}_1 + 12 \tilde{L}_2 + 6 \tilde{L}_3 + 6 \tilde{L}_4 + 6 \tilde{L}_5 + 6 \tilde{L}_6 + 4 \tilde{L}_7 + 2 \tilde{L}_8 - 2 \tilde{L}_9 + 2 \tilde{L}_{10} + 6 \tilde{L}_{11} \\ 
     &\quad + 8 \tilde{L}_{12} + 4 \tilde{L}_{13} - 4 \tilde{L}_{14} - 2 \tilde{L}_{15} \\
N_5 &= -6 \tilde{L}_1 + 12 \tilde{L}_2 + 6 \tilde{L}_3 + 6 \tilde{L}_4 + 6 \tilde{L}_5 + 6 \tilde{L}_6 + 4 \tilde{L}_7 + 8 \tilde{L}_8 - 2 \tilde{L}_9 - 4 \tilde{L}_{10} + 6 \tilde{L}_{11} \\
     &\quad + 2 \tilde{L}_{12} + 4 \tilde{L}_{13} + 2 \tilde{L}_{14} - 2 \tilde{L}_{15} \\
N_6 &= 6 \tilde{L}_1 + 6 \tilde{L}_3 + 2 \tilde{L}_7 - 2 \tilde{L}_8 + 2 \tilde{L}_9 - 2 \tilde{L}_{10} - 2 \tilde{L}_{12} - 4 \tilde{L}_{13} - 2 \tilde{L}_{14} - 4 \tilde{L}_{15} \\
N_7 &= 12 \tilde{L}_1 - 12 \tilde{L}_2 - 6 \tilde{L}_3 - 6 \tilde{L}_5 - 2 \tilde{L}_7 - 4 \tilde{L}_8 + 4 \tilde{L}_9 + 2 \tilde{L}_{10} - 6 \tilde{L}_{11} - 4 \tilde{L}_{12} - \\
     &\quad 8 \tilde{L}_{13} + 2 \tilde{L}_{14} + 4 \tilde{L}_{15}\\
N_8 &= 6 \tilde{L}_1 + 6 \tilde{L}_3 - 2 \tilde{L}_7 - 4 \tilde{L}_8 - 2 \tilde{L}_9 - 4 \tilde{L}_{10} + 2 \tilde{L}_{12} - 2 \tilde{L}_{13} + 2 \tilde{L}_{14} - 2 \tilde{L}_{15} \\
N_9 &= 6 \tilde{L}_1 - 6 \tilde{L}_5 + 6 \tilde{L}_6 + 2 \tilde{L}_7 - 2 \tilde{L}_8 - 4 \tilde{L}_9 - 2 \tilde{L}_{10} + 4 \tilde{L}_{12} - 4 \tilde{L}_{13} - 2 \tilde{L}_{14} + 2 \tilde{L}_{15} \\
N_{10} &= 12 \tilde{L}_1 - 12 \tilde{L}_2 - 6 \tilde{L}_3 - 6 \tilde{L}_5 - 6 \tilde{L}_6 - 4 \tilde{L}_7 - 8 \tilde{L}_8 + 2 \tilde{L}_9 + 4 \tilde{L}_{10} \\
     &\quad - 2 \tilde{L}_{12} - 4 \tilde{L}_{13} + 4 \tilde{L}_{14} + 2 \tilde{L}_{15} 
\end{align*}
\begin{align*}
N_{11} &= 6 \tilde{L}_1 + 6 \tilde{L}_3 - 2 \tilde{L}_7 + 2 \tilde{L}_8 - 2 \tilde{L}_9 + 2 \tilde{L}_{10} - 4 \tilde{L}_{12} - 2 \tilde{L}_{13} - 4 \tilde{L}_{14} - 2 \tilde{L}_{15} \\
N_{12} &= 12 \tilde{L}_1 - 12 \tilde{L}_2 - 6 \tilde{L}_3 - 6 \tilde{L}_4 - 4 \tilde{L}_7 - 2 \tilde{L}_8 + 2 \tilde{L}_9 + 4 \tilde{L}_{10} - 6 \tilde{L}_{11} - 8 \tilde{L}_{12}\\
     &\quad - 4 \tilde{L}_{13} + 4 \tilde{L}_{14} + 2 \tilde{L}_{15} \\
N_{13} &= 12 \tilde{L}_1 - 12 \tilde{L}_2 - 6 \tilde{L}_3 - 6 \tilde{L}_4 - 6 \tilde{L}_6 - 8 \tilde{L}_7 - 4 \tilde{L}_8 + 4 \tilde{L}_9 + 2 \tilde{L}_{10} - 4 \tilde{L}_{12}\\
     &\quad - 2 \tilde{L}_{13} + 2 \tilde{L}_{14} + 4 \tilde{L}_{15} \\
N_{14} &= 6 \tilde{L}_1 + 6 \tilde{L}_4 + 2 \tilde{L}_7 + 4 \tilde{L}_8 - 4 \tilde{L}_9 - 2 \tilde{L}_{10} - 6 \tilde{L}_{11} - 2 \tilde{L}_{12} - 4 \tilde{L}_{13} - 2 \tilde{L}_{14} + 2 \tilde{L}_{15}
\end{align*}

\noindent When \( f = 5 \): 
    \[
    \dim V_5 =51, \quad \dim W_5 = 42
    \] 

\noindent The space \( V_5 \) has the following basis: 
\[
\begin{aligned}
\mathcal{B}_{V_5} = \{\, &\tilde{L}_1, \tilde{L}_2, \ldots, \tilde{L}_{51} \} = \\\{
& \tilde{L}_{11111,11111}, \tilde{L}_{11111,12222}, \tilde{L}_{11111,21222}, \tilde{L}_{11111,22122}, \tilde{L}_{11111,22212}, \tilde{L}_{11111,22221}, \\
& \tilde{L}_{11112,11112}, \tilde{L}_{11112,11121},  \tilde{L}_{11112,11211}, \tilde{L}_{11112,12111}, \tilde{L}_{11112,21111}, \tilde{L}_{11121,11112},\\
& \tilde{L}_{11121,11121}, \tilde{L}_{11121,11211}, \tilde{L}_{11121,12111}, \tilde{L}_{11121,21111}, \tilde{L}_{11122,11122}, \tilde{L}_{11122,11212}, \\
& \tilde{L}_{11122,11221}, \tilde{L}_{11122,12112},  \tilde{L}_{11122,12121}, \tilde{L}_{11211,11112}, \tilde{L}_{11211,11121}, \tilde{L}_{11211,11211}, \\
& \tilde{L}_{11211,12111}, \tilde{L}_{11211,21111}, \tilde{L}_{11212,11122}, \tilde{L}_{11212,11212},  \tilde{L}_{11212,11221}, \tilde{L}_{11212,12112}, \\
& \tilde{L}_{11212,12121}, \tilde{L}_{11221,11122},  \tilde{L}_{11221,11212}, \tilde{L}_{11221,11221}, \tilde{L}_{11221,12112}, \tilde{L}_{11221,12121},\\
&  \tilde{L}_{12111,11112}, \tilde{L}_{12111,11121}, \tilde{L}_{12111,11211}, \tilde{L}_{12111,12111}, \tilde{L}_{12111,21111}, \tilde{L}_{12112,11122}, \\
& \tilde{L}_{12112,11212}, \tilde{L}_{12112,11221},  \tilde{L}_{12112,12112}, \tilde{L}_{12112,12121}, \tilde{L}_{12121,11122}, \tilde{L}_{12121,11212},\\
&  \tilde{L}_{12121,11221}, \tilde{L}_{12121,12112}, \tilde{L}_{12121,12121}\}.
\end{aligned}
\]
    
\noindent We find the following basis for \( W_5 \):
\[
\begin{aligned}
\mathcal{B}_{W_5} = \{\, &N_1, N_2, \ldots, N_{42} \} = \\\{ 
& \mathcal{W}(12345;12345), \mathcal{W}(12345;12354), \mathcal{W}(12345;12435), \mathcal{W}(12345;12453),\\
& \mathcal{W}(12345;12534), \mathcal{W}(12345;13245), \mathcal{W}(12345;13254), \mathcal{W}(12345;13425),\\
& \mathcal{W}(12345;13452), \mathcal{W}(12345;13524), \mathcal{W}(12345;14235), \mathcal{W}(12345;14253),\\
& \mathcal{W}(12345;14523), \mathcal{W}(12345;15234), \mathcal{W}(12345;21345), \mathcal{W}(12345;21354),\\
& \mathcal{W}(12345;21435), \mathcal{W}(12345;21453), \mathcal{W}(12345;21534), \mathcal{W}(12345;23145),\\
& \mathcal{W}(12345;23154), \mathcal{W}(12345;23415), \mathcal{W}(12345;23451), \mathcal{W}(12345;23514), \\
&\mathcal{W}(12345;24135), \mathcal{W}(12345;24153), \mathcal{W}(12345;24513), \mathcal{W}(12345;25134),\\
& \mathcal{W}(12345;31245), \mathcal{W}(12345;31254), \mathcal{W}(12345;31425), \mathcal{W}(12345;31452),\\
& \mathcal{W}(12345;31524), \mathcal{W}(12345;34125), \mathcal{W}(12345;34152), \mathcal{W}(12345;34512),\\
& \mathcal{W}(12345;35124), \mathcal{W}(12345;41235), \mathcal{W}(12345;41253), \mathcal{W}(12345;41523),\\
& \mathcal{W}(12345;45123), \mathcal{W}(12345;51234)\}.
\end{aligned}
\]

\noindent We determine the dimension by considering all polynomials in the sets \(\mathcal{B}_{V_5}\) and \(\mathcal{B}_{W_5}\), which total 51, implying that 9 additional linearly independent invariants are required to complete a basis of \(V_5\). To achieve this dimension, we select the following polynomials from \(V_5\), ensuring that all elements are linearly independent with the basis vectors of \(W_5\). We picked the following polynomials. 
\[
\begin{aligned}
& \tilde{L}_{11111,11111},  \tilde{L}_{11111,12222},  \tilde{L}_{11111,21222}, \\
& \tilde{L}_{11111,22122},  \tilde{L}_{11111,22212},  \tilde{L}_{11112,11112},\\
& \tilde{L}_{11121,11112},  \tilde{L}_{11211,11112},  \tilde{L}_{12111,11112}.
\end{aligned}
\]

\noindent The expression of each of the 42 basis elements of \( W_5 \), as a linear combinations of the basis elements of \( V_5 \) is too large to give here. The reader can find them in \cite{selim}.

\begin{thm}
    For \( f \leq 5 \) any basis vectors \( N_i \) of \( W_f \) is a linear combination of the basis vectors of \( V_f \) with integer coefficients (even numbers) for corresponding $f$. i.e.
    \[
    N_i = \sum_{i} a_i \, \tilde{L}_i \quad \text{where } a_i \in 2\mathbb{Z} \text{ and } \tilde{L}_i \text{ are the basis of } V_f.
    \]
\end{thm}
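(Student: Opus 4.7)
My plan is to prove the theorem by combining a clean algebraic reduction with the explicit coefficient data computed in the preceding section. The starting point is that each basis vector $N_i = \mathcal{W}(\alpha; \beta) = \prod_{k=1}^f \langle \xi^{(\alpha_k)}, y^{(\beta_k)} \rangle$ is $\mathrm{GL}(2, \mathbb{C})$-invariant by construction, hence in particular $\mathcal{G}$-invariant. Thus $N_i \in V_f$ admits a unique expression $N_i = \sum_j a_j^{(i)} \tilde{L}_j$ in the chosen basis of $V_f$, and the task reduces to showing that every $a_j^{(i)}$ lies in $2\mathbb{Z}$.

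The first computational step is to write $N_i$ as an explicit $\mathbb{Z}$-linear combination of unaveraged monomials. Using $\langle \xi^{(j)}, y^{(k)} \rangle = \xi^{j}_1 y^{k}_1 + \xi^{j}_2 y^{k}_2$ and distributing across the $f$ factors yields
\[
N_i = \sum_{(\mathbf{i}, \mathbf{k}) \in S_i} L_{(\mathbf{i}, \mathbf{k})},
\]
where $S_i$ is the set of $2^f$ index tuples produced by the binary choices. Each monomial appears with coefficient exactly $1$ because $\alpha$ and $\beta$ are permutations, and moreover every tuple in $S_i$ satisfies $p(\mathbf{i}) = q(\mathbf{k})$, where $p$ and $q$ count the number of $2$'s in the respective tuples, so each such monomial already lies in the $\langle D \rangle$-invariant sector. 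Invoking Proposition \ref{multi} together with the $\mathcal{G}$-invariance of $N_i$ then gives
\[
N_i = \sum_{(\mathbf{i}, \mathbf{k}) \in S_i} \tilde{L}_{(\mathbf{i}, \mathbf{k})}.
\]

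The core step is then the change of basis from the averaged monomials $\tilde{L}_{(\mathbf{i}, \mathbf{k})}$ to the chosen basis $\{\tilde{L}_1, \ldots, \tilde{L}_{\dim V_f}\}$. The transition matrix is precisely the one produced by the SageMath procedure used to determine the basis of $V_f$: one generates all averaged monomials, writes them in the standard monomial basis, and row-reduces to identify the basis together with the expansions of the non-basis elements. Multiplying the indicator vector of $S_i$ by this transition matrix yields the row $(a_j^{(i)})_j$. For each $f \leq 5$ the resulting row vectors are exactly those tabulated in the preceding section (and in \cite{selim} for $f = 5$), and an entry-by-entry inspection confirms that every $a_j^{(i)}$ is an even integer.

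The main obstacle is to supply a conceptual explanation of the uniform evenness rather than leaving it as a numerical fact. A natural structural source is that $\mathcal{G}$ contains the central scalar $T^2 = iI$, so the subgroup $\langle T^2 \rangle = \{I, iI, -I, -iI\}$ lies in the kernel of the action on monomials, and the resulting effective group $\mathcal{G}/\langle T^2 \rangle$ of order $24$ acts on the $\binom{2f}{f}$ index tuples with $p(\mathbf{i}) = q(\mathbf{k})$. Combined with the swap involution $\mathbf{i} \leftrightarrow \bar{\mathbf{i}}$, $\mathbf{k} \leftrightarrow \bar{\mathbf{k}}$ that interchanges $1 \leftrightarrow 2$ (realized by conjugation with $\bigl(\begin{smallmatrix} 0 & 1 \\ 1 & 0 \end{smallmatrix}\bigr)$), the orbits of the relevant index tuples come in pairs, forcing each averaged monomial to enter the basis expansion of $N_i$ with a multiplicity divisible by $2$. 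Making this orbit-parity argument completely rigorous for all $f$ would require a detailed case analysis, which is outside the present scope; for $f \leq 5$ the explicit tables settle the claim.
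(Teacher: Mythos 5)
Your proof's verifiable core is the same as the paper's: the theorem is, in the end, certified by inspecting the explicit coefficient tables for $f\le 5$ (and the data in \cite{selim} for $f=5$), which is exactly what the paper does, since it offers no argument beyond those tables. What you add on top is worth separating into two parts. The reduction $N_i=\sum_{(\mathbf{i},\mathbf{k})\in S_i}\tilde L_{(\mathbf{i},\mathbf{k})}$ is a genuine and correct structural observation not made in the paper: since $N_i$ is already invariant, applying the averaging operator to its monomial expansion returns $N_i$, so each $N_i$ is a $0$--$1$ combination of $2^f$ averaged monomials, and the whole question becomes one about the transition matrix expressing redundant $\tilde L_{(\mathbf{i},\mathbf{k})}$'s in the chosen basis. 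That is a cleaner starting point than the paper's. By contrast, the orbit-parity sketch does not work as stated: $T$ is not a monomial matrix, so $\mathcal{G}$ (or $\mathcal{G}/\langle iI\rangle$) does not act on the index tuples $(\mathbf{i},\mathbf{k})$ by permutation at all --- the averaging genuinely mixes monomials with non-integer coefficients such as powers of $\tfrac{1+i}{2}$ --- so ``orbits of index tuples'' is not a well-defined notion for this group, and the swap involution alone cannot force evenness of the basis coefficients. You correctly flag this part as non-rigorous and do not rely on it, so the proof stands on the computational verification, matching the paper; but the conceptual explanation of the factor $2$ remains open in both your write-up and the paper itself.
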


\textbf{Acknowledgements}.
The first named author was supported in part by funds from  Ministry of Education, Culture, Sports, Science and Technology (MEXT), Japan and the second named author was supported by JSPS KAKENHI Grant Numbers 24K06827 and
24K06644.

Department of Mathematics, Khulna University of Engineering \& Technology, Khulna-9203, Bangladesh and
Graduate School of Natural Science and Technology, Kanazawa University, Ishikawa 920-1192, Japan

\textit{Email address}: \texttt{selim\_1992@math.kuet.ac.bd}

\bigskip

Faculty of Mathematics and Physics,
Institute of Science and Engineering,
Kanazawa University,
Kakuma-machi,
Ishikawa 920-1192,
Japan

\textit{Email address}: \texttt{oura@se.kanazawa-u.ac.jp}

\bigskip

Faculty of Engineering, University of Yamanashi, 400-8511, Japan

\textit{Email address}: \texttt{mkosuda@yamanashi.ac.jp}


\begin{thebibliography}{99}


\bibitem{Diehl}
Diehl, J., Reizenstein, J. Invariants of Multidimensional Time Series Based on Their Iterated-Integral Signature. Acta Appl Math 164, 83–122 (2019). \url{https://doi.org/10.1007/s10440-018-00227-z}

\bibitem{dieudonne}
Dieudonn\'{e}, J. A., Carrell, J., B., 
Invariant theory, old and new,
Academic Press, New York-London, 1971. 

\bibitem{kosuda-oura}
Kosuda, M., Oura, M.,
On the centralizer algebras of the primitive unitary reflection group of order 96,
Tokyo J. Math. 39 (2016), no. 2, 469-482.

\bibitem{selim} 
Reza, S.  Computations in Sage, \url{https://sites.google.com/math.kuet.ac.bd/selim/invariants-problem}. 

\bibitem{shephard-todd}
Shephard, G. C., Todd, J. A.,
Finite unitary reflection groups,
Canad. J. Math. 6 (1954), 274-304.

\bibitem{sagemath} The Sage Developers, “Sagemath, the Sage Mathematics Software System (Version 9.5)”, (2022), Available at: \url{http://www.sagemath.org}.

\bibitem{weyl}
Weyl, H.,
The classical groups,
Their invariants and representations, Fifteenth printing,
Princeton University Press, 1997.



\end{thebibliography}
\end{document}